\newtheorem{theorem}{Theorem}[section]
\newtheorem{lemma}[theorem]{Lemma}
\newtheorem{corollary}[theorem]{Corollary}
\theoremstyle{definition}
\theoremstyle{definition} 
\newtheorem{remark}[theorem]{Remark}
\newtheorem*{remark*}{Remark}
\newcommand{\R}{\mathbb{R}}
\newcommand{\E}{\operatorname{\mathsf{E}}}
\renewcommand{\P}{\operatorname{\mathsf{P}}}
\newcommand{\dd}{\operatorname{d}}
\renewcommand{\dd}{\mathrm{d}}
\newcommand\tsum{\textstyle\sum\nolimits}
\newcommand{\al}{\alpha}
\newcommand{\ga}{\gamma}
\newcommand{\vp}{\varepsilon}
\newcommand{\vpi}{\varphi}
\numberwithin{equation}{section}
\begin{document}

\title{Explicit additive decomposition of norms on $\R^2$}
%\footnote{\jobname.tex}
\markright{Decomposition of norms}
\author{Iosif Pinelis}

\address{Department of Mathematical Sciences, Michigan Technological University, Houghton, Michigan, 49931}
%\curraddr{}
\email{ipinelis@mtu.edu}
%\thanks{File: convex\_decomp$\backslash$\jobname.tex}

%%    author two information
%\author{}
%\address{}
%\curraddr{}
%\email{}
%%\thanks{}

\subjclass[2010]{Primary 46B04; secondary 26D20, 39B22, 39B52, 39B62, 60B11, 60E15, 60E10, 62H99}

%26-XX 			Real functions [See also 54C30]
%26A51   	Convexity, generalizations
% 	26D10   	Inequalities involving derivatives and differential and integral operators
% 	 	26D15   	Inequalities for sums, series and integrals
%		26D20   	Other analytical inequalities
%
%39-XX 			Difference and functional equations		
%		39B22   	Equations for real functions [See also 26A51, 26B25]
%		39B52   	Equations for functions with more general domains and/or ranges
%		 	39B62   	Functional inequalities, including subadditivity, convexity, etc. [See also 26A51, 26B25, 26Dxx]
%
%46-XX 			Functional analysis 		 	
%		 	46B04   	Isometric theory of Banach spaces
%		 	
%		 	 	60B11   	Probability theory on linear topological spaces
% 60E10   	Characteristic functions; other transforms
%		 	 	 	60E15   	Inequalities; stochastic orderings

\keywords{Additive decomposition, normed space, isometric imbedding, %Popoviciu inequality, 
Hlawka inequality, characteristic functions, Littlewood--Khinchin--Kahane--Lata{\l}a--Oleszkiewicz inequality, Buja--Logan--Reeds--Shepp inequality}

\date{\today}

%\dedicatory{}

\begin{abstract}
A well-known result by Lindenstrauss is that any two-dimensional normed space can be isometrically imbedded into $L_1(0,1)$. 
We provide an explicit form of a such an imbedding. The proof is elementary and self-contained. 
Applications are given concerning the following: (i) explicit representations of the moments of the norm of a random vector $X$ in terms of the characteristic function and the Fourier--Laplace transform of the distribution of $X$; (ii) an explicit and partially improved form of the exact version of the Littlewood--Khinchin--Kahane inequality obtained by Lata{\l}a and Oleszkiewicz; (iii) an extension of an inequality by Buja--Logan--Reeds--Shepp,  arising from a statistical problem.   
%An open question is posed. 
\end{abstract}

\maketitle

%!!! $\E\|X\|$ in terms of c.f.\ of $X$
%
%
%!!! if $\ga_n$ is the standard Gaussian measure on $\R^n$, then for all $x\in\R^n$ 
%\begin{equation}
%	\|x\|=\sqrt{\frac\pi2}\,\int_{\R^n}|x\cdot t|\,\ga_n(\dd t), 
%\end{equation}
%where $\cdot$ denotes the standard inner product on $\R^n$. In place of $\ga_n$, one can similarly use any other orthogonally invariant measures $\nu$ on $\R^n$ such that %$0<\int_{\R^n}|x\cdot t|\,\mu(\dd t)<\infty$ 
%$\int_{\R^n}|x\cdot t|\,\nu(dt)\in(0,\infty)$ for some or, equivalently, any nonzero vector $x$ in $\R^n$.

\section{Main result and discussion}
Let $V$ be a vector space over $\R$ endowed with a norm $\|\cdot\|$, with the dual space $V^*$. 
%As usual, if $V$ is Hilbert space, let us identify $V^*$ with $V$. 
For any $x\in V$ and $\ell\in V^*$, let $x\ell$ denote the value of the linear functional $\ell$ at $x$. 
Let us say that the norm $\|\cdot\|$ admits 
an additive decomposition if there exists a Borel measure $\mu$ on $V^*$ such that  
\begin{equation}\label{eq:general V}
	\|x\|=\int_{V^*}|x\ell|\,\mu(\dd\ell)  \quad\text{for all $x\in V$.}
\end{equation} 
Clearly, such a decomposition exists if $V$ is one-dimensional. In this note, an explicit decomposition of the form \eqref{eq:general V} will be given in the case when $V$ is two-dimensional. It is well known that, in general, there is no such decomposition if the dimension of $V$ is greater than $2$; cf.\ Remark~\ref{rem:} in the present note. 

To state our main result, let us recall some basic facts about convex functions. 
Suppose that a function $f\colon\R\to\R$ is convex. Then $f$ is continuous and has finite nondecreasing right and left derivatives $f'_+$ and $f'_-$, which are right- and left-continuous, respectively. 
Moreover, the function $f':=(f'_++f'_-)/2$ is nondecreasing as well. The Lebesgue--Stieltjes integral $\int_\R\vpi(t)\,\dd f'(t)$ is the Lebesgue integral $\int_\R\vpi\,\dd\nu$, where $\nu$ is the Borel measure determined by the condition that $\nu\big((a,b]\big)=f'_+(b)-f'_+(a)$ for all real $a$ and $b$ such that $a<b$. The latter condition is equivalent to each of the following conditions: (i) $\nu\big([a,b)\big)=f'_-(b)-f'_-(a)$ for all real $a$ and $b$ such that $a<b$ and (ii) $\nu\big([a,b)\big)+\nu\big((a,b]\big)=2\big(f'(b)-f'(a)\big)$ for all real $a$ and $b$ such that $a<b$. 

Now we are ready to state the following explicit additive decomposition of an arbitrary norm in the case when $V=\R^2$. 

\begin{theorem}\label{th:}
Let $\|\cdot\|$ be any norm on $\R^2$. %, and let $N(u,v):=\|(u,v)\|$ for all $(u,v)\in\R^2$. 
Let $N(u):=\|(u,1)\|$ for all real $u$. Then the function $N$ is convex, the limit 
\begin{equation}\label{eq:c}
	c:=\lim_{u\to\infty}\big(\|(u,1)\|-2\|(u,0)\|+\|(u,-1)\|\big)
\end{equation}
exists and is finite and nonnegative, and  
\begin{equation}\label{eq:N=}
	\|(u,v)\|=\frac{c|v|}2+\frac12\,\int_\R|u-tv|\,\dd N'(t)\quad\text{for all $(u,v)\in\R^2$.} 
\end{equation}
\end{theorem}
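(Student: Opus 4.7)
The plan is to reduce the two-dimensional identity to a one-dimensional statement about the convex function $N$, using absolute homogeneity of the norm: for $v\ne0$ one has $\|(u,v)\|=|v|\,N(u/v)$ and $|v|\,|u/v-t|=|u-tv|$, so dividing \eqref{eq:N=} by $|v|$ reduces it to
\begin{equation*}
N(s)=\tfrac{c}{2}+\tfrac{1}{2}\int_\R|s-t|\,\dd N'(t)\qquad(s\in\R),
\end{equation*}
while the case $v=0$ falls out automatically once $\nu(\R)=2\|(1,0)\|$ is known, where $\nu:=\dd N'$.

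Convexity of $N$ is immediate since $u\mapsto(u,1)$ is affine. Writing $a:=\|(1,0)\|$ and $e:=\|(0,1)\|$, absolute homogeneity gives $N(u)/|u|\to a$ as $|u|\to\infty$; since $N'$ is nondecreasing, this forces $N'(\pm\infty)=\pm a$, so $\nu$ is a finite measure with $\nu(\R)=2a$. Next, the function $u\mapsto N(u)-au$ has nonpositive derivative and is bounded below by $-e$ (from $N(u)\ge a|u|-e$), hence admits a finite limit $c_+$ at $+\infty$; symmetrically $c_-:=\lim_{u\to-\infty}(N(u)+au)$ exists. Writing
\begin{equation*}
\|(u,1)\|-2\|(u,0)\|+\|(u,-1)\|=(N(u)-au)+(N(-u)-au)
\end{equation*}
shows the limit \eqref{eq:c} exists and equals $c_++c_-$, and the triangle inequality $\|(u,1)\|+\|(u,-1)\|\ge\|(2u,0)\|$ gives nonnegativity. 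A short Fubini computation then yields $\int_{(0,\infty)}t\,\nu(\dd t)=\int_0^\infty(a-N'_+(t))\,\dd t=N(0)-c_+$ and, symmetrically, $\int_{(-\infty,0)}(-t)\,\nu(\dd t)=N(0)-c_-$; in particular $\int|t|\,\nu(\dd t)=2N(0)-c<\infty$, so $g(s):=\tfrac12\int|s-t|\,\nu(\dd t)$ is well-defined for every $s$.

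The heart of the argument is the identity $N\equiv g+c/2$. I would prove it by showing that $g$ has the same one-sided derivatives as $N$: using the identifications $\nu((-\infty,s])=N'_+(s)+a$ and $\nu((s,\infty))=a-N'_+(s)$, a direct dominated-convergence argument applied to $(|s+h-t|-|s-t|)/h$ gives
\begin{equation*}
g'_+(s)=\tfrac12\bigl[\nu((-\infty,s])-\nu((s,\infty))\bigr]=N'_+(s),
\end{equation*}
and similarly $g'_-=N'_-$; hence the two convex functions $N$ and $g$ differ by a constant, and this constant is pinned down by $g(0)=\tfrac12\int|t|\,\nu(\dd t)=N(0)-c/2$. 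The main obstacle I anticipate is the careful bookkeeping around atoms of $\nu$ when one-sided-differentiating $g$, namely isolating the contribution from $\{t=s\}$ so that the formulas for $g'_\pm$ come out cleanly; once that is in place, the rest is standard integration by parts, limit-taking, and the homogeneity reduction back to general $(u,v)$.
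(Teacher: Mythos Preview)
Your proposal is correct and complete; the reduction by homogeneity, the existence and value of $c=c_++c_-$, the finiteness of $\int|t|\,\nu(\dd t)$, and the treatment of $v=0$ via $\nu(\R)=2\|(1,0)\|$ all match what is needed.

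The one genuine difference from the paper is in how you establish the one-dimensional identity $N(s)=\tfrac{c}{2}+\tfrac12\int|s-t|\,\dd N'(t)$. The paper isolates this as a lemma about an arbitrary convex $f$ with finite $d_\pm$ and proves it by writing $f'(z)+k=\int_{(-\infty,z]}\dd f'(t)$, integrating in $z$ from $-\infty$ to $u$, and swapping the order of integration to get $f(u)+ku=d_-+\int_\R\max(0,u-t)\,\dd f'(t)$; the symmetric identity is added to this one. You instead define $g(s)=\tfrac12\int|s-t|\,\nu(\dd t)$, compute its one-sided derivatives by dominated convergence, match them with $N'_\pm$, and pin down the additive constant at $s=0$. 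Both arguments are standard and short; the paper's Fubini route has the minor advantage that it sidesteps entirely the atom bookkeeping you flag as your main obstacle (no pointwise differentiation of $|s-t|$ at $t=s$ is needed), while your route makes the value $\int|t|\,\nu(\dd t)=2N(0)-c$ appear explicitly and early, which is convenient since this quantity is also what justifies the dominated-convergence passage to $v=0$.
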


\noindent Obviously, \eqref{eq:N=} is an explicit additive decomposition of the form \eqref{eq:general V}, with $\mu$ defined by the condition that $2\int_{\R^2}g\big((s,t)\big)\,\mu(\dd s\times\dd t)=cg\big((0,1)\big)+\int_\R g\big((1,-t)\big)\,\dd N'(t)$ for all nonnegative Borel-measurable functions $g\colon\R^2\to\R$.  

Special cases of representation \eqref{eq:N=}, for the $\ell_p$ norm on $\R^2$ with $p>1$, % with $p\in(1,\infty)$ and $p=\infty$, 
are 
\begin{align*}
	&\|(u,v)\|_p=(|u|^p+|v|^p)^{1/p}=\frac{p-1}2\,\int_\R|u-tv|\,|t|^{p-2}\,(|t|^p+1)^{1/p-2}\,\dd t %\quad\text{for}\ p\in(1,\infty), \\ 
\intertext{when $p\in(1,\infty)$ and}	
	&\|(u,v)\|_\infty=\max(|u|,|v|)=\tfrac12\,(|u+v|+|u-v|) 
\end{align*}
for all $(u,v)\in\R^2$. 
In these cases, the constant $c$ in \eqref{eq:N=}--\eqref{eq:c} is $0$. 
A simple case with a nonzero $c$ is given by the formula $\|(u,v)\|=\|(u,v)\|_1%\max(|u+v|,|u-v|)
=|u|+|v|$ for $(u,v)\in\R^2$, with $c=2$. 

%\newpage 

The proof of Theorem~\ref{th:} relies on 

\begin{lemma}\label{lem:} 
Suppose that $f\colon\R\to\R$ is a convex function such that for some real $k$ there exist finite limits 
\begin{equation}\label{eq:d+-}
	d_+:=d_{f,k;+}:=\lim_{u\to\infty}[f(u)-ku]\quad\text{and}\quad d_-:=d_{f,k;-}:=\lim_{u\to-\infty}[f(u)+ku]. 
\end{equation}
Then for all $u\in\R$ 
\begin{equation}\label{eq:f=}
 f(u)=\frac{d_++d_-}2+\frac12\,\int_\R|u-t|\,\dd f'(t).
\end{equation}
\end{lemma}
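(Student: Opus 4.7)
I would compute the integral $\int_\R|u-t|\,\dd f'(t)$ directly, by representing $|u-t|=\int_\R\mathbf 1_{s\in I(t,u)}\,\dd s$ (where $I(t,u)$ is the open interval with endpoints $t$ and $u$) and applying Fubini's theorem. This turns the Stieltjes integral against the measure $\nu:=\dd f'$ into an ordinary Lebesgue integral involving the one-sided tails of $\nu$, which under \eqref{eq:d+-} are explicit in terms of $f'_\pm$; the conclusion then follows from the fundamental theorem of calculus.

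\textbf{Preliminaries.} I would first verify that finiteness of $d_\pm$ in \eqref{eq:d+-} forces $f'_\pm(t)\to k$ as $t\to+\infty$ and $f'_\pm(t)\to-k$ as $t\to-\infty$ (so $k\ge0$). This follows from convexity together with the sandwich $(f(u)-f(0))/u\le f'(u)\le(f(2u)-f(u))/u$ for $u>0$ and the limit $(f(2u)-f(u))/u\to k$ (derived from $f(u)-ku\to d_+$), plus the analogous argument at $-\infty$. Consequently $\nu$ is a finite nonnegative Borel measure on $\R$ of total mass $2k$ with
\[
\nu\bigl((-\infty,s)\bigr)=f'_-(s)+k,\qquad \nu\bigl((s,\infty)\bigr)=k-f'_+(s),
\]
and a short computation (e.g.\ $\int_0^M(k-f'(s))\,\dd s=kM-(f(M)-f(0))\to f(0)-d_+$) shows that $\nu$ has finite first moment, so the integral in \eqref{eq:f=} is absolutely convergent.

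\textbf{Main step.} Fubini then yields
\[
\int_\R|u-t|\,\dd\nu(t)=\int_{-\infty}^{u}\nu\bigl((-\infty,s)\bigr)\,\dd s+\int_{u}^{\infty}\nu\bigl((s,\infty)\bigr)\,\dd s.
\]
Substituting the tail formulas, splitting each outer integral at $0$, and applying the fundamental theorem of calculus yields
\[
\int_{-\infty}^u\bigl(f'_-(s)+k\bigr)\,\dd s=f(u)+ku-d_-,\qquad \int_u^\infty\bigl(k-f'_+(s)\bigr)\,\dd s=f(u)-ku-d_+.
\]
Adding these two identities and dividing by $2$ gives \eqref{eq:f=}.

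\textbf{Main obstacle.} The delicate issues are the asymptotic analysis at $\pm\infty$ (needed to turn the hypothesis $f(M)-kM\to d_+$ into the improper-integral identity $\int_0^\infty(k-f'(s))\,\dd s=f(0)-d_+$ and the symmetric statement at $-\infty$) and the bookkeeping at the single point $t=u$, where $\nu$ may carry an atom; but the latter is entirely harmless since $|u-u|=0$ and the set $\{u\}$ has Lebesgue measure zero in the $s$-variable, so open, closed, and half-open notation can be used interchangeably in the tail integrals.
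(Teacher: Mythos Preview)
Your argument is correct and is essentially the paper's proof run in the opposite direction. The paper starts from $f(u)\pm ku$, writes each as an improper integral of $f'\pm k$, applies Fubini to obtain $\int_\R\max(0,\pm(u-t))\,\dd f'(t)$, and adds; you start from $\int_\R|u-t|\,\dd f'(t)$, apply Fubini to reduce to integrals of the tails $\nu((-\infty,s))=f'_-(s)+k$ and $\nu((s,\infty))=k-f'_+(s)$, and recognize these as $f(u)\pm ku-d_\mp$. The two Fubini swaps are literally the same double integral read in the two possible orders, and both rest on the same two ingredients: the asymptotics $f'\to\pm k$ at $\pm\infty$ and the identities $\int_{-\infty}^u(f'+k)=f(u)+ku-d_-$, $\int_u^\infty(k-f')=f(u)-ku-d_+$. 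Your write-up is slightly more careful about the first-moment finiteness needed to justify Fubini and about the harmlessness of a possible atom at $t=u$, but there is no substantive difference in method.
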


\begin{proof}[Proof of Lemma~\ref{lem:}]
Since the function $f'$ is nondecreasing, there exist limits $k_\pm:=\lim_{x\to\pm\infty}f'(x)\in[-\infty,\infty]$. Moreover, for any real $u>0$ one has $f(u)-ku=f(0)+\int_0^u(f'(t)-k)\,\dd t$, which converges to a finite limit (as $u\to\infty$) only if $k_+=k$. Similarly, $k_-=-k$. So, in view of \eqref{eq:d+-}, for any real $u$ 
\begin{align*}
	f(u)+ku
	=d_-+\int_{-\infty}^u(f'(z)+k)\,\dd z
	=&d_-+\int_{-\infty}^u dz\int_{-\infty}^z\,\dd f'(t) \\ 
		=&d_-+\int_{-\infty}^u df'(t)\int_t^u \dd z\, \\ 
				%=d_--\int_{-\infty}^u (u-t)\dd f'(t)
				=&d_-+\int_\R \max(0,u-t)\,\dd f'(t),   
\end{align*} 
%where $v_+:=\max(0,v)$ for real $v$; 
so that $$f(u)+ku=d_-+\int_\R\max(0,u-t)\,\dd f'(t).$$ 
Similarly, 
$f(u)-ku=d_+ %\break 
+\int_\R \max(0,t-u)\,\dd f'(t)$ for any real $u$. 
Adding the last two identities, one obtains \eqref{eq:f=}. 
\end{proof}

\begin{proof}[Proof of Theorem~\ref{th:}]
That the function $N$ is convex follows immediately from the convexity of the norm. 
Note next that the limits $d_{f,k;\pm}$ in \eqref{eq:d+-} exist in $[-\infty,\infty]$ for any convex function $f\colon\R\to\R$ and any real $k$. 
On the other hand, for all real $u$ 
\begin{equation*}
	\big|N(u)-|u|\,\|(1,0)\|\big|=\big|\|(u,1)\|-\|(u,0)\|\big|\le\|(0,1)\|,  
\end{equation*}
by the norm inequality. 
So, the limits $d_\pm=d_{f,k;\pm}$ in \eqref{eq:d+-} exist and are finite for $f=N$ and 
\begin{equation}\label{eq:k=}
	k=\|(1,0)\|. 
\end{equation}
Therefore, by Lemma~\ref{lem:}, \eqref{eq:f=} holds with $f=N$ and $d_\pm=d_{N,\|(1,0)\|;\pm}$. 
It follows that, with these $d_\pm$, 
\begin{equation}\label{eq:2N=}
	2\|(u,v)\|=2|v|\,\|(u/v,1)\|=2|v|\,N(u/v)=(d_+ + d_-)|v| + \int_\R|u-tv|\,\dd N'(t)
\end{equation}
for all real $u$ and all real $v\ne0$. The last expression in \eqref{eq:2N=} is continuous in $v\in\R$ by dominated convergence -- because, by \eqref{eq:2N=} with $(u,v)=(0,1)$, one has $\int_\R|t|\,\dd N'(t)=2\|(0,1)\|-(d_++d_-)<\infty$. 
Thus, one has \eqref{eq:N=} -- with $d_+ + d_-$ in place of $c$ -- for all $(u,v)\in\R^2$. 

Moreover, %for these $d_\pm$ one has 
\begin{align*}
	d_++d_-=&\lim_{u\to\infty}\big(N(u)-ku+N(-u)-ku\big) \\ 
	=&\lim_{u\to\infty}\big(\|(u,1)\|-2\|(1,0)\|u+\|(-u,1)\|\big) \\ 
		=&\lim_{u\to\infty}\big(\|(u,1)\|-2\|(u,0)\|+\|(u,-1)\|\big)=c\ge0, 
\end{align*}
by \eqref{eq:c} and, again, the convexity of the norm. 

This completes the proof of Theorem~\ref{th:}. 
\end{proof}

From the proofs of Theorem~\ref{th:} and Lemma~\ref{lem:}, it follows that the nondecreasing function $N'$ tends to $\pm k$ as $x\to\pm\infty$, where $k$ is as in \eqref{eq:k=}. So, 
\begin{equation*}
	F:=\tfrac12+\tfrac1{2k}\,N'
\end{equation*}
is a cumulative probability distribution function (cdf), regularized in the sense that $2F(u)=F(u+)+F(u-)$ for all real real $u$. Let the function $F^{-1}\colon(0,1)\to\R$ be (the smallest, left-continuous generalized inverse to $F$,) defined by the condition 
\begin{equation*}
	F^{-1}(s)=\inf\{u\in\R\colon F(u)\ge s\}\quad\text{for }s\in(0,1). 
\end{equation*}
A well-known fact is that, if $S$ is a random variable (r.v.) uniformly distributed on the interval $(0,1)$, then the regularized cdf of the r.v.\ $F^{-1}(S)$ is $F$.  
So, \eqref{eq:N=} can be rewritten as 
\begin{equation}\label{eq:imbed}
	\|(u,v)\|=\int_0^1|u\,\xi(s)+v\,\eta(s)|\,\dd s   
\end{equation}
for all $(u,v)\in\R^2$, 
where 
\begin{equation}\label{eq:xi,eta}
	\xi(s):=\left\{
	\begin{aligned}
	1&\text{ if }0<s<\tfrac12, \\ 
	0&\text{ if }\tfrac12\le s<1, \\ 
	\end{aligned}
	\right.
	\qquad
	\eta(s):=\left\{
	\begin{alignedat}{2}
	-F^{-1}(2s)&&&\text{ if }0<s<\tfrac12, \\ 
	c&&&\text{ if }\tfrac12\le s<1.  
	\end{alignedat}
	\right.
\end{equation}
Thus, the mapping $(u,v)\mapsto u\,\xi+v\,\eta$ is a linear isometric imbedding of $\R^2$ \big(endowed with the arbitrary norm $\|\cdot\|$\big) into $L_1(0,1)$. 

That any two-dimensional normed space is isometric to a subspace of $L_1(0,1)$ was shown by Lindenstrauss \cite[Corollary~2]{lindenstr64}. In distinction from that result, the imbedding into $L_1(0,1)$ given by formulas \eqref{eq:imbed}--\eqref{eq:xi,eta} is quite explicit. Another difference is that our method is elementary and the proof is self-contained. \big(Also, formula \eqref{eq:N=} is simpler than, and therefore in some situations may be preferable to, \eqref{eq:imbed}--\eqref{eq:xi,eta}.\big) On the other hand, the study \cite{lindenstr64} contains a number of results that are more general than Corollary~2 therein. 

It is well known that any Euclidean space %and hence any Hilbert 
is linearly isometric to a subspace of $L_1$. 
Indeed,  
for all $x\in\R^d$ 
\begin{equation}\label{eq:eucl}
	\|x\|_2:=\sqrt{x\cdot x}=\sqrt{\frac\pi2}\,\int_{\R^d}|x\cdot t|\,\ga_d(\dd t), 
\end{equation}
where $\ga_d$ is the standard Gaussian measure on $\R^d$ and $\cdot$ denotes the standard inner product on $\R^d$. In place of $\ga_d$, one can similarly use any other spherically invariant measure $\nu$ on $\R^d$ such that %$0<\int_{\R^d}|x\cdot t|\,\mu(\dd t)<\infty$ 
$\int_{\R^d}|x\cdot t|\,\nu(\dd t)\in(0,\infty)$ for some or, equivalently, any nonzero vector $x$ in $\R^d$. 

Using the imbedding-into-$L_1$ formulas \eqref{eq:eucl} and \eqref{eq:imbed}, it is straightforward to verify Hlawka's inequality 
\begin{equation*}
	\|x+y+z\|+\|x\|+\|y\|+\|z\|\ge\|x+y\|+\|y+z\|+\|z+x\|
\end{equation*}
for all $x,y,z$ in $V$ when either the norm $\|\cdot\|$ on $V$ is Euclidean or $V$ is two-dimensional. 
%; c.f.\ the derivation of \eqref{eq:pop} based on \eqref{eq:V=R}. 
Another way to show that Hlawka's inequality holds for any two-dimensional normed space was presented in \cite{kelly-etal65}. 

\begin{remark}\label{rem:}
In general, a normed space $V$ of any given dimension greater than $2$ is not linearly isometric to a subspace of $L_1$. Indeed, otherwise Hlawka's inequality would hold for all $x,y,z$ in $\R^3$. However, as pointed out e.g.\ in \cite{fechner14}, Hlawka's inequality fails to hold for some $x,y,z$ in $\R^3$ endowed with the supremum norm. 
\end{remark}

\section{Applications}
First here, one has the following representation of the expected norm of a random vector $X$ in $V$ in terms of the characteristic function (c.f.) $V^*\ni\ell\mapsto\E e^{iX\ell}$ of $X$. 
\begin{corollary}
If the additive decomposition \eqref{eq:general V} holds and $X$ is any random vector in $V$, then 
\begin{equation}\label{eq:pos}
\E\|X\|=\frac2\pi\,\int_{V^*\times(0,\infty)}\frac{1-\Re\E e^{itX\ell}}{t^2}\,\mu(\dd\ell)\dd t. 	
\end{equation}
\end{corollary}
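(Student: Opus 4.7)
The plan is to reduce everything to the classical scalar identity
\begin{equation*}
	|y|=\frac2\pi\,\int_0^\infty\frac{1-\cos(ty)}{t^2}\,\dd t\quad\text{for all }y\in\R,
\end{equation*}
which is obtained from $\int_0^\infty(1-\cos u)/u^2\,\dd u=\pi/2$ by the change of variable $u=t|y|$ (and which is trivially true when $y=0$).

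First, starting from the decomposition \eqref{eq:general V} applied pointwise to the random vector $X$, I would take expectations and use Tonelli's theorem (the integrand $|X\ell|$ is nonnegative) to move the expectation inside, obtaining
\begin{equation*}
	\E\|X\|=\int_{V^*}\E|X\ell|\,\mu(\dd\ell).
\end{equation*}
Next, for each fixed $\ell\in V^*$, the quantity $X\ell$ is a real-valued random variable, so the displayed scalar identity gives
\begin{equation*}
	|X\ell|=\frac2\pi\,\int_0^\infty\frac{1-\cos(tX\ell)}{t^2}\,\dd t.
\end{equation*}
Since $1-\cos(tX\ell)\ge0$, another application of Tonelli permits interchanging $\E$ with the $t$-integral, yielding
\begin{equation*}
	\E|X\ell|=\frac2\pi\,\int_0^\infty\frac{1-\E\cos(tX\ell)}{t^2}\,\dd t
	=\frac2\pi\,\int_0^\infty\frac{1-\Re\E e^{itX\ell}}{t^2}\,\dd t,
\end{equation*}
where the last equality uses $\Re\E e^{itX\ell}=\E\cos(tX\ell)$.

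Finally, substituting this expression into the formula for $\E\|X\|$ and invoking Tonelli once more -- again legitimate because $(1-\Re\E e^{itX\ell})/t^2\ge0$ for all $(t,\ell)\in(0,\infty)\times V^*$ -- combines the two integrals into the single integral over $V^*\times(0,\infty)$, which is exactly \eqref{eq:pos}. There is no real obstacle: each interchange is justified by nonnegativity, so no integrability hypothesis beyond the standing assumption \eqref{eq:general V} is required (both sides of \eqref{eq:pos} are simultaneously finite or infinite). The only delicate point worth noting is that the scalar identity must be verified at $y=0$ separately, but the integrand there is identically $0$, so the formula holds trivially.
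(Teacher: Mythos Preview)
Your argument is correct and is precisely the content behind the paper's one-line justification, which simply cites the scalar identity $\E|Z|=\frac2\pi\int_0^\infty(1-\Re\E e^{itZ})\,t^{-2}\,\dd t$ from \cite[Corollary~2]{positive} and applies it with $Z=X\ell$ inside the decomposition \eqref{eq:general V}. You have unpacked that citation into a self-contained derivation via the elementary formula $|y|=\frac2\pi\int_0^\infty(1-\cos ty)\,t^{-2}\,\dd t$ together with Tonelli, which is exactly the expected route.
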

This follows immediately from (say) \cite[Corollary~2]{positive}. 
A similar representation of $\E\|X\|$ in terms of the Fourier--Laplace transform of the distribution of $X$ can be just as easily obtained based on \cite[Theorem~1]{positive}. 
In view of \eqref{eq:N=} and \eqref{eq:eucl}, these representations of $\E\|X\|$ are quite explicit if $V$ two-dimensional or Euclidean.  

Removing both instances of the expectation from \eqref{eq:pos} (that is, replacing $X$ there with a nonrandom vector $x\in V$), then raising both sides to the $j$th power, and finally reapplying the expectation, one obtains  
\begin{equation}\label{eq:pos,power}
\E\|X\|^j=\Big(\frac2\pi\Big)^j\,\int_{\big(V^*\times(0,\infty)\big)^j}
\bigg(\E\prod_{\al=1}^j\big(1-\Re e^{it_\al X\ell_\al}\big)\bigg)\,\prod_{\al=1}^j\frac{\mu(\dd\ell_\al)\dd t_\al}{t_\al^2} 	
\end{equation}
for any natural $j$. 
Note that the expectation in \eqref{eq:pos,power} can be easily expressed in terms of the c.f.\ of $X$, namely, as 
\begin{equation}
	\sum_{(A,B)}\Big(-\frac12\Big)^{|A\cup B|}\,
	\E\exp\Big\{iX\Big(\sum_{\al\in A}t_\al\ell_\al-\sum_{\beta\in B}t_\beta\ell_\beta\Big)\Big\}, 
\end{equation}
where $\sum_{(A,B)}$ denotes the summation over all ordered pairs $(A,B)$ of disjoint subsets of the set $\{1,\dots,j\}$ and $|A\cup B|$ denotes the cardinality of the set $A\cup B$. 

Because of the multiplicativity property of the c.f.\ with respect to the convolution, representations such as \eqref{eq:pos} and \eqref{eq:pos,power} may be especially useful when $X$ is the sum of independent random vectors. 
\begin{center}
	***
\end{center}

Suppose now that $x_1,\dots,x_n$ are any vectors in any normed space $V$ and $\vp_1,\dots,\vp_n$ are independent Rademacher r.v.'s, so that $\P(\vp_i=1)=\P(\vp_i=-1)=\frac12$ for all $i$. 
Lata{\l}a and Oleszkiewicz \cite{latala-olesz_khin-kahane} made an elementary but very ingenious argument to show that 
\begin{equation}\label{eq:lat-olesz}
	2\Big(\E\Big\|\sum \vp_i x_i\Big\|\Big)^2\ge\E\Big\|\sum \vp_i x_i\Big\|^2. 
\end{equation}
Previously, Szarek \cite{szarek76} obtained this result in the case $V=\R$, which had been a long-standing conjecture of Littlewood; see e.g.\ \cite{hall.r75}.  
The constant factor $2$ in \eqref{eq:lat-olesz} is %easily seen to be 
the best possible, even for $V=\R$ (take $n=2$ and $x_1=x_2\ne0$). 

In the case when the norm admits an additive decomposition, the lower bound $\E\Big\|\sum \vp_i x_i\Big\|^2$ in \eqref{eq:lat-olesz} can be improved: 

\begin{corollary}
If decomposition \eqref{eq:general V} holds, then     
\begin{equation}\label{eq:cor1}
	2\Big(\E\Big\|\sum \vp_i x_i\Big\|\Big)^2\ge\Big(\int_{V^*}\sqrt{\sum(x_i\ell)^2}\,\mu(\dd\ell)\Big)^2.  
\end{equation}
\end{corollary}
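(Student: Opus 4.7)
The plan is to reduce to the one-dimensional Khinchin inequality of Szarek (which is the special case $V=\R$ of \eqref{eq:lat-olesz}) and apply it inside the integral furnished by the additive decomposition.

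First, I would use \eqref{eq:general V} together with Fubini to rewrite the left-hand side as
\begin{equation*}
\E\Big\|\tsum\vp_i x_i\Big\|=\int_{V^*}\E\Big|\tsum\vp_i(x_i\ell)\Big|\,\mu(\dd\ell).
\end{equation*}
For each fixed $\ell\in V^*$, the sum $S(\ell):=\sum\vp_i(x_i\ell)$ is a real-valued Rademacher sum with $\E S(\ell)^2=\sum(x_i\ell)^2$, since the $\vp_i$ are independent, centered, and of unit variance. Szarek's inequality (the real-valued case of \eqref{eq:lat-olesz}) then gives
\begin{equation*}
\E|S(\ell)|\ge\frac1{\sqrt2}\,\sqrt{\tsum(x_i\ell)^2}
\end{equation*}
pointwise in $\ell$.

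Second, I would integrate this pointwise bound against $\mu$ and then square, yielding
\begin{equation*}
\Big(\E\Big\|\tsum\vp_i x_i\Big\|\Big)^2\ge\frac12\,\Big(\int_{V^*}\sqrt{\tsum(x_i\ell)^2}\,\mu(\dd\ell)\Big)^2,
\end{equation*}
which is exactly \eqref{eq:cor1}. That this is indeed an improvement over \eqref{eq:lat-olesz} under the assumption \eqref{eq:general V} follows from Minkowski's integral inequality applied with $p=2$, which gives $\E\|\sum\vp_i x_i\|^2\le\big(\int_{V^*}\sqrt{\sum(x_i\ell)^2}\,\mu(\dd\ell)\big)^2$, so the new lower bound dominates the original one.

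There is no real obstacle here: the entire content is in the pointwise application of Szarek's theorem to the scalar Rademacher sums $S(\ell)$ — the additive decomposition \eqref{eq:general V} is precisely what allows one to transfer a scalar inequality to a norm inequality by integration. The only minor point to check is the measurability and integrability needed to swap expectation and the $\mu$-integral, which is immediate from Tonelli's theorem applied to $(\vp,\ell)\mapsto|\sum\vp_i(x_i\ell)|$.
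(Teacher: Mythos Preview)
Your proof is correct and matches the paper's own argument: the paper simply says the corollary ``follows immediately from \eqref{eq:general V} and Szarek's result,'' which is precisely your Fubini/Tonelli step followed by the pointwise application of the scalar Khinchin--Szarek inequality. Your additional remark that the new lower bound dominates the one in \eqref{eq:lat-olesz} is also in the paper, though the paper phrases it via Cauchy--Schwarz on the product integral rather than Minkowski's integral inequality; the two are equivalent here.
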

This follows immediately from \eqref{eq:general V} and Szarek's result. Moreover, if \eqref{eq:general V} holds, then one can rewrite the lower bounds in \eqref{eq:lat-olesz} and \eqref{eq:cor1} respectively as 
$$
\text{$\int\limits_{V^*\times V^*}\!\!\!\E\big|\tsum\vp_i x_i\ell\big|\,\big|\tsum\vp_j x_j m\big|\,\mu(\dd\ell)\mu(\dd m)$ and \hspace*{-10pt}
$\int\limits_{V^*\times V^*}\!\!\!\!\sqrt{\tsum(x_i\ell)^2\tsum(x_j m)^2}\,\mu(\dd\ell)\mu(\dd m)$.}
$$
%$\int_{(V^*)^2}\E\big|\sum\vp_i x_i\ell\big|\,\big|\sum\vp_j x_j m\big|\,\mu(\dd\ell)\mu(\dd m)$ and 
%$\int_{(V^*)^2}\sqrt{\sum(x_i\ell)^2\sum(x_j m)^2}\,\mu(\dd\ell)\mu(\dd m)$,  
So, by the Cauchy--Schwarz inequality, the lower bound in \eqref{eq:cor1} is no less than that in \eqref{eq:lat-olesz}. Moreover, the bound in \eqref{eq:cor1} may be of simpler structure and easier to compute (without having to use the expectation), especially in the two-dimensional case, when one has the explicit additive decomposition \eqref{eq:N=} of the norm. 
On the other hand, \eqref{eq:lat-olesz} holds for any norm and any dimension. 

\begin{center}
	***
\end{center}

In conclusion, consider the inequality 
\begin{equation}\label{eq:buja}
	\E\|X-Y\|_2\le\E\|X+Y\|_2, 
\end{equation}
where $X$ and $Y$ are independent identically distributed (iid) random vectors in $\R^d$ and $\|\cdot\|_2$ is the Euclidean norm, as in \eqref{eq:eucl}. This inequality was obtained in \cite{buja-etal94}. 
As noted in \cite{lifsh-tyurin}, in the case $d=1$ \eqref{eq:buja} follows immediately from the identity 
\begin{equation*}
	\E|X+Y|=\E|X-Y|+2\int_0^\infty[\P(X>r)-\P(X<-r)]^2\,\dd r. 	
\end{equation*}
Now the $L_1$-imbedding formula \eqref{eq:imbed} immediately yields 

\begin{corollary}
For any two-dimensional normed space $V$ and any iid random vectors $X$ and $Y$ in $V$,  
\begin{equation}\label{eq:cor}
	\E\|X-Y\|\le\E\|X+Y\|.  
\end{equation}
\end{corollary}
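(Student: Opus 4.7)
The plan is to reduce the two-dimensional statement to the scalar case $d=1$ of \eqref{eq:buja} (already noted in the excerpt as an immediate consequence of the identity $\E|X+Y|=\E|X-Y|+2\int_0^\infty[\P(X>r)-\P(X<-r)]^2\,\dd r$) by means of the explicit $L_1$-imbedding \eqref{eq:imbed}--\eqref{eq:xi,eta}. After choosing a linear isomorphism between $V$ and $\R^2$ and identifying $X=(X_1,X_2)$, $Y=(Y_1,Y_2)$, the imbedding gives
\begin{equation*}
\|X\pm Y\|=\int_0^1\big|(X_1\pm Y_1)\xi(s)+(X_2\pm Y_2)\eta(s)\big|\,\dd s.
\end{equation*}

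Next, I would take expectations and apply Fubini's theorem to swap expectation and integration in $s$, obtaining
\begin{equation*}
\E\|X\pm Y\|=\int_0^1\E\big|U(s)\pm W(s)\big|\,\dd s,
\end{equation*}
where $U(s):=X_1\xi(s)+X_2\eta(s)$ and $W(s):=Y_1\xi(s)+Y_2\eta(s)$. Since $X$ and $Y$ are iid, for each fixed $s\in(0,1)$ the scalar random variables $U(s)$ and $W(s)$ are iid real-valued random variables.

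Then I would apply the one-dimensional case of \eqref{eq:buja} to the pair $(U(s),W(s))$ for each $s$, which yields $\E|U(s)-W(s)|\le\E|U(s)+W(s)|$. Integrating this pointwise inequality over $s\in(0,1)$ produces \eqref{eq:cor}. The only technicality is justifying Fubini, which is immediate under the implicit assumption $\E\|X\|<\infty$: by \eqref{eq:imbed} with $(u,v)=(X_1,X_2)$ and the triangle inequality, $\int_0^1\E|U(s)\pm W(s)|\,\dd s\le 2\E\|X\|<\infty$. There is no real obstacle here; the whole argument is essentially a one-line appeal to the imbedding, exactly as the phrase ``immediately yields'' in the excerpt suggests.
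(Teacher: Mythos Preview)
Your proposal is correct and is exactly the argument the paper has in mind: the paper simply says that the $L_1$-imbedding formula \eqref{eq:imbed} ``immediately yields'' the corollary, and your write-up spells out precisely this reduction to the scalar case via Fubini and the one-dimensional inequality. The Fubini justification you supply under the assumption $\E\|X\|<\infty$ is the natural one.
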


As shown by Johnson \cite{2iidVectors}, for each natural $d\ge3$ inequality \eqref{eq:cor} fails to hold for $V=\R^d$ in general. Indeed, define the norm on $\R^d$ by the formula 
\begin{equation*}
	\|x\|:=\max\{|x_i|\vee|x_i-x_j|\colon i,j=1,\dots,d\} 
\end{equation*}
for $x=(x_1,\dots,x_d)\in\R^d$. 
Let $(e_1,\dots,e_d)$ be the standard basis of $\R^d$. Let $X$ and $Y$ be iid random vectors in $\R^d$. 
For any natural $d\ge4$, suppose that the random vector $X$ is such that $\P(X=e_i)=\frac1d$ for each $i=1,\dots,d$. Then $\E\|X-Y\|=2\frac{d-1}d>\frac{d+1}d=\E\|X+Y\|$. 
In the remaining case when $d=3$, suppose that the random vector $X$ is such that $\P(X=e_1)=\P(X=e_2)=\P(X=e_3)=\P\big(X=-\frac12\,(e_1+e_2+e_3)\big)=\frac14$. Then $\E\|X-Y\|=\frac{21}{16}>\frac{19}{16}=\E\|X+Y\|$. 

%An intriguing open question is whether inequality \eqref{eq:cor} holds for all dimensions. The answer to this question appears to be yes. However, in view of Remark~\ref{rem:}, it appears that a substantially different approach is needed here. 

{\bf Acknowledgment.}\ This note was sparked by answers by Noam D.\ Elkies and Suvrit Sra on MathOverflow \cite{MO_hlawka} and 
William B. Johnson's comments there. 

\bibliographystyle{abbrv}
%%%%\bibliographystyle{ims}
%%%%\bibliography{are.citations}
%%%%\bibliography{citat}
%%%
%%%%\bibliography{citations}
%%%
\bibliography{C:/Users/ipinelis/Dropbox/mtu/bib_files/citations12.13.12}
%\bibliography{C:/Users/Iosif/Dropbox/mtu/bib_files/citations12.13.12}

%\begin{biog}
%\item
%%[Woodrow Wilson] (twwilson@princeton.edu) received his PhD in history and political science from Johns Hopkins University. He held visiting positions at Cornell and Wesleyan before joining the faculty at Princeton, where he was eventually appointed president of the university.  Among his proudest accomplishments was the abolition of eating clubs at Princeton on the grounds that they were elitist.
%\begin{affil}
%Department of Mathematical Sciences, 
%Michigan Technological University, 
%Houghton, Michigan 49931\\
%ipinelis@mtu.edu
%\end{affil}
%\end{biog}

\end{document}